\newtheorem{corollary}{Corollary}[section]
\newtheorem{theorem}{Theorem}[section]
\newtheorem*{theorem*}{Theorem}
\newtheorem{conjecture}{Conjecture}[section]
\newtheorem{remark}{Remark}[section]
\numberwithin{equation}{section}
\begin{document}
\title{Complete Monotonicity and Zeros of Sums of Squared Baskakov Functions}
\author{Ulrich Abel
  \thanks{Fachbereich MND Technische Hochschule Mittelhessen, Friedberg, Germany. E-mail: Ulrich.Abel@mnd.thm.de}\\
	Wolfgang Gawronski
	\thanks{Department of Mathematics, University of Trier, Germany. E-mail: gawron@uni-trier.de}\\
	Thorsten Neuschel
	\thanks{Department of Mathematics, KU Leuven, Belgium. E-mail: Thorsten.Neuschel@wis.kuleuven.be}}

 \date{\today}

\maketitle
\begin{abstract} We prove complete monotonicity of sums of squares of generalized Baskakov basis functions by deriving the corresponding results for hypergeometric functions. Moreover, in the central Baskakov case we study the distribution of the complex zeros for large values of a parameter. We finally discuss the extension of some results for sums of higher powers.
\end{abstract}

\paragraph{Keywords} Baskakov operator, Baskakov basis function, complete monotonicity, convexity, Chebyshev-Gr\"{u}ss-type inequality, distribution of zeros, Laplace transform, hypergeometric function, complete elliptic integral of the first kind

\paragraph{Mathematics Subject Classification (2010)} 26D10, 26D15, 47A58, 33C20
\section{Introduction}
Recently, there has been an increasing interest in monotonicity and convexity properties of sums of squared basis functions arising in approximation theory (see, e.g., \cite{Gonska}, \cite{Nikolov}, \cite{Gavrea}, \cite{Rasa}). Among the classical approximation operators usually studied in this context we find operators constructed using the basis of Bernstein polynomials, the Mirakjan-Favard-Sz\'{a}sz basis, the Meyer-K\"{o}nig and Zeller basis, the Lagrange basis, the Bleimann-Butzer-Hahn basis and the Baskakov basis. As there is a huge amount of literature on this subject we only refer to \cite{Altomare} for a general introduction. Apart from being of general interest, one motivation for the study of analytic properties of sums of squares of such basis functions is that this proves crucial in a novel approach of Chebyshev-Gr\"{u}ss-type inequalities via discrete oscillations introduced by Gonska, Ra\c{s}a and Rusu in \cite{Gonska} (see also \cite{Gonska0}). These types of inequalities provide estimates for generalized Chebyshev functionals of the form
\[\left\vert L(fg)-L(f)L(g)\right\vert \leq E\left(L,f,g\right),\]
where \(L\) is a linear functional and \(E\) is an expression in terms of properties of \(L\) and some kind of oscillations of \(f\) and \(g\). More precisely, for an arbitrary set \(X\) let 
\(L: B(X) \rightarrow \mathbb{R}\), \(Lf=\sum_{k=0}^{\infty} a_k f(x_k),\) be a positive linear functional defined on the real-valued bounded functions \(B(X)\), where \((x_k)\) is a sequence of distinct points of \(X\) and \(a_k \geq 0\) such that \(\sum_{k=0}^{\infty} a_k =1\). Then we have \cite[Thm. 9]{Gonska}
\begin{equation}\label{CGI}\left\vert L(fg)-L(f)L(g)\right\vert \leq \frac{1}{2} \left(1-\sum_{k=0}^{\infty} a_k^2\right) \mathrm{osc}(f) \mathrm{osc}(g),
\end{equation}
where we define
\[\mathrm{osc}(f)=\sup\left\{\vert f(x_k)-f(x_l) \vert : 0\leq k< l < \infty\right\}\]
and similarly for \(\mathrm{osc}(g)\). Hence, the study of classical approximation operators in the light of the inequality \eqref{CGI} requires a good control of the sums of squares \(\sum_{k=0}^{\infty} a_k^2\). For instance, the investigation of the Bernstein operator lead Gonska, Ra\c{s}a and Rusu to state three interesting conjectures on sums of squared Bernstein functions which recently have been solved and extended using different approaches (see \cite{Gonska}, \cite{Nikolov}, \cite{Gavrea}).

One purpose of this paper is the study of corresponding results for generalized Baskakov bases in the following sense. For \(n\in \mathbb{N}\), \(k \in \mathbb{N}\cup\{0\}\) and a real number \(c>0\) we define the basis functions of Baskakov type (initially introduced in \cite{Baskakov}, see also \cite{Abel}, \cite{Berdy})
\[p_{n,k}^{[c]}(x)=\binom{-n/c}{k}\left(-cx\right)^k \left(1+cx\right)^{-n/c -k},\quad x\geq0.\]
The generalized Baskakov operators defined by
\begin{equation}\label{Op} L_n^{[c]}f (x) = \sum_{k=0}^{\infty}p_{n,k}^{[c]}(x)f\left(\frac{k}{n}\right),\quad x\geq 0,
\end{equation}
form a family of positive operators approximating continuous functions satisfying certain growth conditions. Hence, in this paper we are concerned mainly with the functions
\begin{equation}\label{f1}\psi_{n,c}(x)=\sum_{k=0}^{\infty}\left(p_{n,k}^{[c]}(x)\right)^2,\quad x\geq 0.
\end{equation}
The paper is structured as follows. In Section 2 our first aim is to show the following theorem (see Theorem \ref{C1}).
\begin{theorem*} The function \(\psi_{n,c}\) defined in \eqref{f1} is completely monotonic in the sense of Bernstein's theorem.
\end{theorem*}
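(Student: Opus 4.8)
The plan is to exhibit \(\psi_{n,c}\) as the Laplace transform of a nonnegative measure on \([0,\infty)\); by Bernstein's theorem this is precisely complete monotonicity. \emph{First} I would put \(\psi_{n,c}\) into closed form. From \(\binom{-n/c}{k}(-cx)^k=\frac{(n/c)_k}{k!}(cx)^k\) one has \(p_{n,k}^{[c]}(x)=\frac{(n/c)_k}{k!}(cx)^k(1+cx)^{-n/c-k}\), hence the generating identity \(\sum_{k\ge 0}p_{n,k}^{[c]}(x)z^k=(1+cx(1-z))^{-n/c}\) and, extracting the diagonal via Parseval's identity,
\[\psi_{n,c}(x)=(1+cx)^{-2n/c}\,{}_2F_1\!\left(\tfrac nc,\tfrac nc;1;\Bigl(\tfrac{cx}{1+cx}\Bigr)^2\right)=\frac1{2\pi}\int_{-\pi}^{\pi}\bigl|1+cx(1-e^{i\theta})\bigr|^{-2n/c}\,d\theta.\]
So it suffices to prove complete monotonicity in terms of the parameter \(\lambda:=n/c>0\); and since \(x\mapsto g(cx)\) is completely monotonic whenever \(g\) is and \(c>0\), one may even normalise \(c=1\).

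\emph{Second}, and this is the crux, the integral over \(\theta\) above is not directly useful, because for fixed \(\theta\) the integrand \((1+4cx(1+cx)\sin^2(\theta/2))^{-n/c}\) is \emph{not} completely monotonic in \(x\) (the relevant quadratic has non-real roots). To linearise the \(x\)-dependence I would instead write each factor \((1+cx)^{-n/c-k}\) as a Gamma integral, \(\Gamma(n/c+k)^{-1}\int_0^\infty e^{-(1+cx)s}s^{n/c+k-1}\,ds\), square \(p_{n,k}^{[c]}(x)\) to obtain a double integral over \((s,t)\in(0,\infty)^2\), and sum over \(k\) using \(\sum_{k\ge0}w^k/(k!)^2={}_0F_1(;1;w)=I_0(2\sqrt w)=\tfrac1\pi\int_0^\pi e^{2\sqrt w\cos\phi}\,d\phi\) with \(w=(cx)^2st\). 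The upshot is the representation
\[\psi_{n,c}(x)=\frac1{\pi\,\Gamma(n/c)^2}\int_0^\pi\!\!\int_0^\infty\!\!\int_0^\infty e^{-(s+t)-cx\left(s+t-2\sqrt{st}\cos\phi\right)}\,(st)^{n/c-1}\,ds\,dt\,d\phi,\]
in which the parameter \(x\) appears only through the exponential factor \(e^{-x\sigma}\) with \(\sigma=c\bigl(s+t-2\sqrt{st}\cos\phi\bigr)\ge c(\sqrt s-\sqrt t)^2\ge 0\).

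\emph{Third}, the displayed formula presents \(\psi_{n,c}\) as \(\int_0^\infty e^{-x\sigma}\,d\mu(\sigma)\), where \(\mu\) is the push-forward of the positive measure \(\frac1{\pi\Gamma(n/c)^2}e^{-(s+t)}(st)^{n/c-1}\,ds\,dt\,d\phi\) on \((0,\infty)^2\times(0,\pi)\) along \((s,t,\phi)\mapsto c(s+t-2\sqrt{st}\cos\phi)\), and \(\mu([0,\infty))=\psi_{n,c}(0)=1\), so \(\mu\) is a probability measure. Bernstein's theorem then yields the theorem. The only genuine work is the bookkeeping that legitimises interchanging the sum with the integrals and applying Fubini--Tonelli; I expect this to be the main (though routine) obstacle. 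It is handled cleanly by the uniform bound \(-(s+t)-cx(s+t-2\sqrt{st}\cos\phi)\le-(s+t)\) for all \(x\ge0\) and \(\phi\) (since \(2\sqrt{st}\cos\phi\le s+t\)), which dominates the triple integrand by the integrable function \(e^{-(s+t)}(st)^{n/c-1}\) and, as a byproduct, re-proves \(0\le\psi_{n,c}(x)\le1\). A purely ``hypergeometric'' variant of the argument would replace the middle step by an Euler-type integral representation of \({}_2F_1(\lambda,\lambda;1;\cdot)\) valid for all \(\lambda>0\), leading to the same measure \(\mu\).
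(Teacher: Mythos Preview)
Your argument is correct and follows essentially the same route as the paper: both arrive at the triple integral
\[
\psi_{n,c}(x)=\frac{1}{\pi\,\Gamma(n/c)^2}\int_{0}^{\pi}\!\int_{0}^{\infty}\!\int_{0}^{\infty} e^{-(s+t)}\,e^{-cx\,(s+t\pm 2\sqrt{st}\cos\phi)}\,(st)^{n/c-1}\,ds\,dt\,d\phi
\]
with a nonnegative rate in the \(x\)-exponential, after introducing two Gamma integrals and the Bessel identity \(I_0(2\sqrt{w})=\pi^{-1}\int_0^\pi e^{2\sqrt{w}\cos\phi}\,d\phi\). The only cosmetic differences are that the paper inserts the Gamma integrals into the Pochhammer symbols of the \({}_2F_1\) series (then rescales \(s,t\) to absorb the prefactor \((1+x)^{-2\alpha}\)), whereas you insert them directly into \((1+cx)^{-n/c-k}\); and the paper concludes by differentiating under the integral sign, whereas you push forward to a probability measure and invoke Bernstein.
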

This is derived by proving a slightly more general result on a class of generalized hypergeometric functions in Theorem \ref{T1}. A suitable representation as a generalized multivariate Laplace transform turns out to be a crucial tool for the proof. We provide three applications of this. The first one gives a monotonicity result on the complete elliptic integral of the first kind. Next, we deal with the a recent conjecture of I. Ra\c{s}a stating the following (see Conjecture 7.1 in \cite{Rasa}):
\begin{conjecture} The function \(\psi_{n,c}\) is logarithmically convex.
\end{conjecture}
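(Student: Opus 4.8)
The plan is to derive the logarithmic convexity of $\psi_{n,c}$ directly from its complete monotonicity, established in Theorem~\ref{C1}. By Bernstein's theorem --- more concretely, via the (multivariate) Laplace-transform representation produced in the proof of Theorem~\ref{T1}, pushed forward along the relevant linear form if necessary --- there is a nonnegative Borel measure $\mu=\mu_{n,c}$ on $[0,\infty)$ with
\[
\psi_{n,c}(x)=\int_{[0,\infty)}e^{-xt}\,d\mu(t),\qquad x>0 .
\]
Moreover, since $0\le\bigl(p_{n,k}^{[c]}(x)\bigr)^{2}\le p_{n,k}^{[c]}(x)$ and $\sum_{k\ge0}p_{n,k}^{[c]}\equiv1$, the series \eqref{f1} converges and, by Dini's theorem, does so locally uniformly; hence $\psi_{n,c}$ is continuous on $[0,\infty)$ with $\psi_{n,c}(0)=\bigl(p_{n,0}^{[c]}(0)\bigr)^{2}=1$. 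In particular $\psi_{n,c}>0$ on all of $[0,\infty)$, so $\log\psi_{n,c}$ is well defined there.

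The first step is to differentiate under the integral sign, which is legitimate on each compact subinterval of $(0,\infty)$: there the integrands $t^{j}e^{-xt}$ are dominated by fixed $\mu$-integrable functions, because finiteness of $\int e^{-xt}\,d\mu$ for every $x>0$ forces $\int t^{j}e^{-xt}\,d\mu<\infty$ for all $j\ge0$ and all $x>0$. Writing $m_{j}(x)=\int t^{j}e^{-xt}\,d\mu(t)$, this gives $\psi_{n,c}=m_0$, $\psi_{n,c}'=-m_1$ and $\psi_{n,c}''=m_2$ on $(0,\infty)$. The second step is the Cauchy--Schwarz inequality, applied in $L^{2}$ of the finite measure $e^{-xt}\,d\mu(t)$ to the functions $t\mapsto1$ and $t\mapsto t$:
\[
\psi_{n,c}'(x)^{2}=m_1(x)^{2}\le m_0(x)\,m_2(x)=\psi_{n,c}(x)\,\psi_{n,c}''(x),\qquad x>0 .
\]
Consequently $(\log\psi_{n,c})''=\bigl(\psi_{n,c}\psi_{n,c}''-(\psi_{n,c}')^{2}\bigr)/\psi_{n,c}^{2}\ge0$ on $(0,\infty)$, and the third step --- promoting convexity of $\log\psi_{n,c}$ from the open half-line to $[0,\infty)$ --- follows at once from continuity of $\psi_{n,c}$ at $0$.

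I do not expect a genuine obstacle here: once Theorem~\ref{C1} (equivalently, the Laplace representation of Theorem~\ref{T1}) is available, the argument is the classical observation that the Laplace transform of a nonnegative measure is log-convex, and the only points asking for a line of care are the differentiation under the integral sign and the behaviour at the endpoint $x=0$, both handled above. Conceptually the same conclusion can be reached with no differentiation at all: logarithmic convexity is stable under pointwise limits and under sums of positive functions, and $\int e^{-xt}\,d\mu(t)$ is a locally uniform limit of finite positive combinations $\sum_{i}c_{i}e^{-xt_{i}}$ of the log-linear functions $x\mapsto e^{-xt_{i}}$; the Cauchy--Schwarz step above is simply the quantitative face of this closure property. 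Finally, it seems worth stressing that the individual summands $\bigl(p_{n,k}^{[c]}\bigr)^{2}$ are \emph{not} log-convex --- their logarithms behave like $2k\log x$ as $x\to0^{+}$ for $k\ge1$ --- so the full-sum structure encoded in Theorem~\ref{C1} is genuinely needed, and this is precisely where the real content of the conjecture lies.
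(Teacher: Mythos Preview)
Your proof is correct and follows the same route as the paper: both deduce the logarithmic convexity of \(\psi_{n,c}\) (for \(c>0\)) from its complete monotonicity established in Theorem~\ref{C1}. The only difference is that the paper simply invokes the general fact that completely monotonic functions are logarithmically convex (citing \cite{Fink}), whereas you spell out that implication via the Bernstein representation and Cauchy--Schwarz.
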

We point out that our result on the complete monotonicity of \(\psi_{n,c}\) verifies the statement of this conjecture in the case \(c>0\). As a third application we obtain a new Chebyshev-Gr\"{u}ss-type inequality for the generalized Baskakov operator defined in \eqref{Op} again for arbitrary \(c>0\). Next, in the central Baskakov case \(c=1\), we study the behavior of all complex zeros of the rational function \(\psi_{n,1}\) for large values of \(n\). This involves some logarithmic potential theory (as introduced in \cite{Saff}) and it is based on a specific representation as a finite hypergeometric sum derived in Theorem \ref{T2}. The main result we will state here is the following (see Theorem \ref{C3}):
\begin{theorem*} All complex zeros of \(\psi_{n,1}\) follow the equilibrium measure on \(D_{\frac{1}{2}}\left(-\frac{1}{2}\right)\), as \(n\rightarrow\infty\). In particular, for large values of \(n\), all the zeros approach the boundary of \(D_{\frac{1}{2}}\left(-\frac{1}{2}\right)\) and they are uniformly distributed in the limit.
\end{theorem*}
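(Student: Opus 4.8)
The plan is to reduce the statement to the classical limit distribution of the zeros of Legendre polynomials. By Theorem~\ref{T2}, in the central case \(c=1\) the function \(\psi_{n,1}\) is the rational function
\[
\psi_{n,1}(x)=\frac{Q_n(x)}{(1+2x)^{2n-1}},\qquad
Q_n(x)=\sum_{k=0}^{n-1}\binom{n-1}{k}^{2}x^{2k}(1+x)^{2(n-1-k)},
\]
a polynomial numerator of degree \(2n-2\). Since \(Q_n(-\tfrac12)=4^{1-n}\binom{2n-2}{n-1}\neq 0\), the representation is in lowest terms, so the zeros of \(\psi_{n,1}\) are exactly the \(2n-2\) zeros of \(Q_n\). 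The first genuine step is therefore to locate these zeros explicitly.

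I would do this via the classical identity \(\sum_{k=0}^{m}\binom{m}{k}^{2}t^{k}=(1-t)^{m}P_{m}\!\big(\tfrac{1+t}{1-t}\big)\), where \(P_m\) denotes the Legendre polynomial, applied with \(m=n-1\) and \(t=x^{2}/(1+x)^{2}\); using \(1-t=(1+2x)/(1+x)^{2}\) this collapses the sum and yields
\[
\psi_{n,1}(x)=\frac{P_{n-1}\!\big(w(x)\big)}{(1+2x)^{n}},\qquad w(x)=\frac{2x^{2}+2x+1}{1+2x}.
\]
Hence the zeros of \(\psi_{n,1}\) are the solutions of \(w(x)=y\) as \(y\) ranges over the zeros \(y_1,\dots,y_{n-1}\in(-1,1)\) of \(P_{n-1}\). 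Solving \(2x^{2}+2(1-y)x+(1-y)=0\) gives \(x=-\tfrac12+\tfrac12\big(y\pm i\sqrt{1-y^{2}}\big)\), so, writing \(y_j=\cos\theta_j\) with \(\theta_j\in(0,\pi)\), the zeros are precisely \(x_j^{\pm}=-\tfrac12+\tfrac12 e^{\pm i\theta_j}\). In particular, for \emph{every} \(n\), all zeros of \(\psi_{n,1}\) already lie on the circle \(\{\,|x+\tfrac12|=\tfrac12\,\}=\partial D_{1/2}(-1/2)\) — this is the ``approach the boundary'' conclusion in its strongest possible form.

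It then remains to identify the limit measure, and this is where a mild dose of logarithmic potential theory enters, through a single classical input: the normalized zero-counting measure of \(P_{m}\) converges weak-\(\ast\), as \(m\to\infty\), to the equilibrium (arcsine) measure \(\tfrac{1}{\pi\sqrt{1-y^{2}}}\,dy\) of \([-1,1]\); equivalently, the angles \(\theta_j\) become equidistributed on \((0,\pi)\). Since \(\theta\mapsto-\tfrac12+\tfrac12 e^{i\theta}\) and \(\theta\mapsto-\tfrac12+\tfrac12 e^{-i\theta}\) parametrise the upper and lower half-circle arcs of \(\partial D_{1/2}(-1/2)\), and each Legendre angle \(\theta_j\) contributes one zero to each arc, the arguments of the zeros of \(\psi_{n,1}\) become equidistributed on \((-\pi,\pi]\). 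Therefore the normalized zero-counting measures of \(\psi_{n,1}\) converge weak-\(\ast\) to the normalized arclength on \(\partial D_{1/2}(-1/2)\), which, by rotational symmetry (see \cite{Saff}), is exactly the equilibrium measure of the disk \(D_{1/2}(-1/2)\); this yields both assertions of the theorem.

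The only substantive points are thus (i) the algebraic identity turning the finite hypergeometric sum of Theorem~\ref{T2} into \(P_{n-1}(w(x))/(1+2x)^{n}\) together with the observation that the fibre \(w^{-1}(y)\) of every \(y\in(-1,1)\) sits on the circle \(\partial D_{1/2}(-1/2)\), and (ii) invoking the arcsine limit law for the zeros of Legendre polynomials. A more self-contained route, closer in spirit to the general potential-theoretic framework of \cite{Saff}, would bypass the Legendre detour and compute the weak-\(\ast\) limit of the zero-counting measures of the hypergeometric polynomials \(Q_n\) directly by ratio-asymptotics and balayage, and then verify that its push-forward under \(x=-\tfrac12+\tfrac12 e^{i\theta}\) (the pull-back of the \([-1,1]\)-problem under \(t=x^{2}/(1+x)^{2}\)) is the disk's equilibrium measure; carrying that out abstractly is the step I would expect to be the main obstacle if one foregoes the Legendre shortcut.
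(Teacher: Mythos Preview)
Your proof is correct and takes a route genuinely different from the paper's. The paper first proves an independent potential-theoretic result (its Theorem~\ref{Thm1}) that the zeros of the hypergeometric polynomials \({}_2F_1(-n,\tfrac12;-n+\tfrac12;z)\) converge to the uniform distribution on the unit circle, using the asymptotics of \cite{Sri} together with Helly's selection principle and Carleson's unicity theorem; the statement about \(\psi_{n,1}\) then follows by transporting this through the substitution \(z=(1+2x)^{-2}\) from Theorem~\ref{T2}. You instead recognise the finite sum as a Legendre polynomial, obtaining \(\psi_{n,1}(x)=P_{n-1}\!\big(w(x)\big)/(1+2x)^{n}\) with \(w(x)=(2x^{2}+2x+1)/(1+2x)\), and invoke only the classical arcsine limit law for the zeros of \(P_{n-1}\). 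Your argument is more elementary---no appeal to \cite{Sri} or to potential theory beyond identifying the disk's equilibrium measure with normalised arclength---and it yields the sharper intermediate fact that for \emph{every} \(n\) all zeros of \(\psi_{n,1}\) already lie exactly on \(\partial D_{1/2}(-1/2)\), not merely asymptotically. The paper's route, by contrast, isolates Theorem~\ref{Thm1} as a result of independent interest and illustrates a template (ratio asymptotics plus unicity of the logarithmic potential) that transfers to more general hypergeometric families where a Legendre shortcut is unavailable; it is also worth noting that the Legendre connection you exploit is observed in the paper only \emph{after} the proof, in the remark on associated Legendre functions.
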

Here, \(D_{\frac{1}{2}}\left(-\frac{1}{2}\right)\) denotes the disk around \(-1/2\) with radius \(1/2\). A small third section will be devoted to sums of higher powers of the form
\[\psi_{n,c}^{[r]}(x)=\sum_{k=0}^{\infty}\left(p_{n,k}^{[c]}(x)\right)^r,\quad x\geq 0,\]
where \(r>1\) is an arbitrary integer. In Theorem \ref{L1} we will generalize the integral representation as a multivariate Laplace type transform (see Remark \ref{R1}) to arbitrary powers \(r\). Although this representation seems to reveal the complete monotonicity of \(\psi_{n,c}^{[r]}\) (for even positive integers \(r\)) without additional effort only in the case \(r=2\), we use it to derive the decay at infinity of the derivatives of all orders (see Remark \ref{R2}). Finally, we state a conjecture about the complete monotonicity of \(\psi_{n,c}^{[r]}\) (and of a related generalized hypergeometric function) in the case \(r\) is a positive even integer.

\section{Sums of squared Baskakov functions}
The first aim in this section is to show that \(\psi_{n,c}\) defined in \eqref{f1} is a completely monotonic function in the sense of Bernstein's theorem. To this end, we first prove the following theorem.
\begin{theorem}\label{T1} For \(\alpha >0\) we define 
\[f_{\alpha}(x)=(1+x)^{-2\alpha}\sum_{k=0}^{\infty}\binom{-\alpha}{k}^2\left(\frac{x}{1+x}\right)^{2k}, \quad x\geq 0.\]
Then \(f_{\alpha}\) is a completely monotonic function.
\end{theorem}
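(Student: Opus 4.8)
The plan is to realize $f_\alpha$ as a Laplace transform (in fact a \emph{generalized multivariate Laplace transform}) of a nonnegative measure and then invoke Bernstein's theorem. First I would put the series into hypergeometric form: since $\binom{-\alpha}{k}=(-1)^k(\alpha)_k/k!$ with $(\alpha)_k=\alpha(\alpha+1)\cdots(\alpha+k-1)$, one has $\binom{-\alpha}{k}^2=(\alpha)_k^2/(k!)^2$, so that
\[
f_\alpha(x)=(1+x)^{-2\alpha}\sum_{k=0}^{\infty}\frac{(\alpha)_k^2}{(k!)^2}\left(\frac{x^2}{(1+x)^2}\right)^{k}=(1+x)^{-2\alpha}\,{}_2F_1\!\left(\alpha,\alpha;1;\frac{x^2}{(1+x)^2}\right),
\]
which is finite for all $x\ge 0$ because $x^2/(1+x)^2<1$.

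Next I would feed in the Eulerian integral $(1+x)^{-\alpha-k}=\Gamma(\alpha+k)^{-1}\int_0^{\infty}t^{\alpha+k-1}e^{-(1+x)t}\,dt$; using $(\alpha)_k/\Gamma(\alpha+k)=1/\Gamma(\alpha)$, the $k$-th summand $q_k(x):=\frac{(\alpha)_k}{k!}x^k(1+x)^{-\alpha-k}$ becomes
\[
q_k(x)=\frac{1}{\Gamma(\alpha)}\int_0^{\infty}\frac{(xt)^k}{k!}\,t^{\alpha-1}e^{-t}e^{-xt}\,dt .
\]
Squaring with an independent copy of the integration variable and summing over $k$ (legitimate by Tonelli, every term being nonnegative for $x\ge 0$), and using $\sum_{k\ge 0}w^k/(k!)^2=I_0(2\sqrt w)$, I obtain
\[
f_\alpha(x)=\frac{1}{\Gamma(\alpha)^2}\int_0^{\infty}\!\!\int_0^{\infty}I_0\!\left(2x\sqrt{ts}\right)(ts)^{\alpha-1}e^{-(t+s)}e^{-x(t+s)}\,dt\,ds .
\]

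The decisive step is to linearize the $x$-dependence hidden in the Bessel factor. Inserting the representation $I_0(z)=\frac{1}{\pi}\int_0^{\pi}e^{z\cos\theta}\,d\theta$ collects all occurrences of $x$ into a single exponential:
\[
f_\alpha(x)=\frac{1}{\pi\,\Gamma(\alpha)^2}\int_0^{\pi}\!\!\int_0^{\infty}\!\!\int_0^{\infty}e^{-x\left(t+s-2\sqrt{ts}\cos\theta\right)}(ts)^{\alpha-1}e^{-(t+s)}\,dt\,ds\,d\theta ,
\]
and now the elementary inequality $t+s-2\sqrt{ts}\cos\theta\ge(\sqrt t-\sqrt s)^2\ge 0$ shows the exponent is nonnegative. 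Thus, pushing the finite nonnegative measure $\frac{1}{\pi\Gamma(\alpha)^2}(ts)^{\alpha-1}e^{-(t+s)}\,dt\,ds\,d\theta$ forward under $(t,s,\theta)\mapsto t+s-2\sqrt{ts}\cos\theta$ exhibits $f_\alpha$ as the Laplace transform of a nonnegative Borel measure on $[0,\infty)$ (of total mass $f_\alpha(0)=1$), and Bernstein's theorem gives complete monotonicity.

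The routine parts are the Fubini/Tonelli bookkeeping — everything is nonnegative for $x\ge 0$, and after the cosine integral the integrand is dominated by $(ts)^{\alpha-1}e^{-(t+s)}$ since $e^{x(2\sqrt{ts}\cos\theta-(t+s))}\le 1$ — together with the normalization check. I expect the only genuine obstacle to be spotting this chain of identities, in particular the passage $\sum_k\mapsto I_0\mapsto$ cosine integral that converts the awkward modified Bessel factor into a true exponential $e^{-xu}$; once the integral representation is in hand, the conclusion is immediate, and it delivers the claimed theorem on $\psi_{n,c}$ by taking $\alpha=n/c$ and rescaling $x$.
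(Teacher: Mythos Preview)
Your proof is correct and follows essentially the same route as the paper: rewrite $f_\alpha$ as a ${}_2F_1$, replace the Pochhammer symbols by gamma integrals to produce a double integral involving $I_0$, open $I_0$ via its cosine integral, and read off complete monotonicity from the nonnegativity of the resulting exponent. The only cosmetic differences are that you absorb the factor $(1+x)^{-2\alpha}$ directly through the Euler integral $e^{-(1+x)t}$ (the paper carries it along and removes it by a change of variables at the end), and you invoke Bernstein via a pushforward measure whereas the paper simply differentiates under the integral sign.
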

\begin{proof} The proof can be based on a representation of \(f_{\alpha}\) by means of a triple integral of Laplace type. Using the notion of generalized hypergeometric functions we can write
\[f_{\alpha}(x)=(1+x)^{-2\alpha}~  _{2} F_{1} \left(\begin{matrix}
  \alpha,&\alpha \\ 1 \end{matrix}\,\bigg\vert\,  \left(\frac{x}{1+x}\right)^{2} \right)=(1+x)^{-2\alpha} \sum_{k=0}^{\infty}\frac{(\alpha)^2_k}{k!^2}\left(\frac{x}{1+x}\right)^{2k}.
\]
Substituting the two Pochhammer symbols by integrals of the form
\[(\alpha)_k =\frac{1}{\Gamma (\alpha)}\int\limits_{0}^{\infty}e^{-t} t^{\alpha+k-1} dt,\]
we obtain after some algebra
\begin{align*}f_{\alpha}(x)&=\frac{(1+x)^{-2\alpha}}{\Gamma(\alpha)^2}\int\limits_{(0,\infty)^2} e^{-(s+t)} (st)^{\alpha-1} \sum_{k=0}^{\infty}\frac{1}{k!^2}\left(st \left(\frac{x}{1+x}\right)^2\right)^{k} d(s,t)\\
&=\frac{(1+x)^{-2\alpha}}{\Gamma(\alpha)^2}\int\limits_{(0,\infty)^2} e^{-(s+t)} (st)^{\alpha-1} I_0 \left(2\sqrt{st} \frac{x}{1+x}\right)d(s,t),
\end{align*}
where \(I_0\) denotes the modified Bessel function of order \(0\). Now, using the well-known integral representation
\begin{equation}\label{Int1}I_0(y)=\frac{1}{\pi}\int\limits_{0}^{\pi}e^{-y \cos(\theta)} d\theta
\end{equation}
we get
\begin{align}\nonumber f_{\alpha}(x)&=\frac{(1+x)^{-2\alpha}}{\pi \Gamma(\alpha)^2}\int\limits_{(0,\infty)^2} \int\limits_{0}^{\pi} e^{-(s+t)} (st)^{\alpha-1} \exp\left(-2\sqrt{st}\frac{x}{1+x} \cos(\theta)\right) d(s,t) d\theta\\\label{LT1}
&=\frac{1}{\pi \Gamma(\alpha)^2}\int\limits_{(0,\infty)^2} \int\limits_{0}^{\pi} e^{-(s+t+2\sqrt{st}\cos\theta)x}  e^{-(s+t)}(st)^{\alpha-1}  d(s,t) d\theta,
\end{align}
where the last equality results from a simple change of the variables \(s\) and \(t\). Observing that we have \(s+t+2\sqrt{st}\cos\theta \geq 0\), by differentiation of \eqref{LT1} with respect to \(x\) we obtain for every \(m\in\mathbb{N}\cup\{0\}\)
\[(-1)^m \left(\frac{d}{dx}\right)^m f_{\alpha}(x) > 0, \quad x\geq 0.\]
\end{proof}
\begin{remark}\label{R1}The identity \eqref{LT1} in the proof of Theorem \ref{T1} can be considered as a representation of \(f_{\alpha}\) as a generalized multivariate Laplace transform. Moreover, by Bernstein's theorem, it follows from the statement of Theorem \ref{T1} that there exists a representation of \(f_{\alpha}\) as a univariate Laplace transform in the usual sense.
\end{remark}
\begin{remark} An application of Theorem \ref{T1} in the case \(\alpha=1/2\) shows the complete monotonicity of the function
\[\frac{1}{1+x}K\left(\frac{x}{1+x}\right), \quad x\geq 0,\]
where \(K\) denotes the complete elliptic integral of the first kind (for the definition see, e.g., \cite[p.\ 487]{NIST}). This follows from identity
\[K(x)= \frac{\pi}{2}\  _{2} F_{1} \left(\begin{matrix}
  1/2,&1/2 \\ 1 \end{matrix}\,\bigg\vert\,  x^2 \right),\quad x\in[0,1).\]
An application of Bernstein's theorem then provides an integral representation of the form
\[K(x) = \frac{1}{1-x}\int\limits_{0}^{\infty} e^{-\frac{x}{1-x} t} d\mu(t),\quad x\in[0,1),\]
for a finite positive Borel measure \(\mu\). This representation may prove useful in the study of the function \(K\). There is a serious amount of literature on elliptic integrals, for monotonicity properties see, e.g., \cite{Alzer}.
\end{remark}
In regards to Baskakov functions, from Theorem \ref{T1} we now obtain the following theorem.
\begin{theorem}\label{C1} For \(n\in\mathbb{N}\) and \(c>0\) the function \(\psi_{n,c}\) defined in \eqref{f1} is completely monotonic.
\end{theorem}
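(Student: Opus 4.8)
The plan is to reduce the statement directly to Theorem \ref{T1} by recognizing \(\psi_{n,c}\) as a rescaled copy of \(f_{\alpha}\) for a suitable parameter \(\alpha\). First I would rewrite the square of a single basis function. Since \((-cx)^{2k}=(cx)^{2k}\) and \((1+cx)^{-n/c-k}\) squares to \((1+cx)^{-2n/c}(1+cx)^{-2k}\), one obtains
\[
\left(p_{n,k}^{[c]}(x)\right)^2=\binom{-n/c}{k}^2 (1+cx)^{-2n/c}\left(\frac{cx}{1+cx}\right)^{2k},
\]
and summing over \(k\) gives
\[
\psi_{n,c}(x)=(1+cx)^{-2n/c}\sum_{k=0}^{\infty}\binom{-n/c}{k}^2\left(\frac{cx}{1+cx}\right)^{2k}.
\]

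Comparing this with the definition of \(f_{\alpha}\) in Theorem \ref{T1}, I would then observe that choosing \(\alpha=n/c\) and performing the substitution \(x\mapsto cx\) makes the two expressions coincide, i.e.\ \(\psi_{n,c}(x)=f_{n/c}(cx)\) for all \(x\geq 0\). Because \(n\in\mathbb{N}\) and \(c>0\), the parameter \(\alpha=n/c\) is positive, so Theorem \ref{T1} applies and \(f_{n/c}\) is completely monotonic on \([0,\infty)\).

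It then remains to note that complete monotonicity is preserved under composition with the linear map \(x\mapsto cx\) whenever \(c>0\): writing \(g=f_{n/c}\), the chain rule gives \((-1)^m \frac{d^m}{dx^m}\,g(cx)=c^m(-1)^m g^{(m)}(cx)\geq 0\) for every \(m\in\mathbb{N}\cup\{0\}\), since \(c^m>0\). This yields the complete monotonicity of \(\psi_{n,c}\), which is the assertion; combined with Remark \ref{R1} it also furnishes a representation of \(\psi_{n,c}\) as a classical univariate Laplace transform. I do not expect a genuine obstacle in this argument: the entire analytic content already resides in Theorem \ref{T1}, and the only point requiring care is matching the hypergeometric series term by term after the rescaling.
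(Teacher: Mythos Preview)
Your argument is correct and follows exactly the approach of the paper: rewrite \(\psi_{n,c}(x)\) as \(f_{n/c}(cx)\), invoke Theorem~\ref{T1} with \(\alpha=n/c>0\), and then use that complete monotonicity is preserved under the positive dilation \(x\mapsto cx\). The only difference is cosmetic---you spell out the chain-rule justification for the last step, which the paper leaves implicit.
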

\begin{proof} We have
\[\psi_{n,c}(x)=(1+cx)^{-2n/c}\sum_{k=0}^{\infty}\binom{-n/c}{k}^2\left(\frac{cx}{1+cx}\right)^{2k}.\]
Putting \(\alpha=n/c\) in Theorem \ref{T1} we know that the function \(\psi_{n,c}\left(\frac{x}{c}\right)\) is completely monotonic. Hence, the same is true for the function \(\psi_{n,c}\) as well.
\end{proof}
\begin{remark} \label{R3}The limiting case \(c\rightarrow 0\) corresponds to the sum of the squared Mirakjan-Favard-Sz\'{a}sz basis functions. The statement of Theorem \ref{C1} remains true in this case as was shown by Gavrea and Ivan in \cite{Gavrea}. This follows from the representation
\[\psi_{n,0}(x)=\sum_{k=0}^{\infty}\left(\frac{e^{-nx} (nx)^k}{k!}\right)^2=\frac{2}{\pi}\int\limits_{0}^{\pi/2} e^{-4n x \sin^2 t} dt.\]
In the case \(c=-1\) the functions \(\psi_{n,c}\) can be interpreted as sums of squares of the Bernstein basis. For a related result in this case see Theorem 3 together with Remark 1 in \cite{Gavrea}.
\end{remark}
From the general fact that completely monotonic functions are logarithmically convex (see, e.g., Theorem. 1 in \cite{Fink}) we immediately obtain the following corollary.
\begin{corollary} For \(n\in\mathbb{N}\) and \(c>0\) the function \(\psi_{n,c}\) is logarithmically convex.
\end{corollary}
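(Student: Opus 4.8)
The plan is to deduce this immediately from Theorem~\ref{C1}, using the classical fact that every completely monotonic function on $[0,\infty)$ is logarithmically convex. Since Theorem~\ref{C1} already establishes that $\psi_{n,c}$ is completely monotonic, only this general implication remains, and I would either cite it (e.g.\ Theorem~1 in \cite{Fink}) or reproduce the short argument.

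For completeness I would sketch the argument as follows. Logarithmic convexity of a positive smooth function $f$ is equivalent to $f(x)f''(x) \geq f'(x)^2$ for all $x$, i.e.\ to $(\log f)'' \geq 0$. By Bernstein's theorem (already invoked in Remark~\ref{R1}), complete monotonicity of $\psi_{n,c}$ yields a representation
\[
\psi_{n,c}(x) = \int_0^\infty e^{-xt}\,d\mu(t)
\]
for a finite positive Borel measure $\mu$. Differentiating under the integral sign gives $\psi_{n,c}'(x) = -\int_0^\infty t\,e^{-xt}\,d\mu(t)$ and $\psi_{n,c}''(x) = \int_0^\infty t^2 e^{-xt}\,d\mu(t)$. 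Writing $t\,e^{-xt} = \bigl(t\,e^{-xt/2}\bigr)\bigl(e^{-xt/2}\bigr)$ and applying the Cauchy--Schwarz inequality in $L^2(d\mu)$ then yields
\[
\psi_{n,c}'(x)^2 \leq \left(\int_0^\infty t^2 e^{-xt}\,d\mu(t)\right)\left(\int_0^\infty e^{-xt}\,d\mu(t)\right) = \psi_{n,c}''(x)\,\psi_{n,c}(x),
\]
which is exactly the claimed inequality. (Alternatively one can apply H\"older's inequality directly to the defining series \eqref{f1}, or to the Laplace-type representation \eqref{LT1}.)

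There is essentially no obstacle here: the content of the corollary is entirely contained in Theorem~\ref{C1}, and the remaining step is a one-line consequence of Cauchy--Schwarz. The only point worth checking is that $\psi_{n,c}$ is strictly positive on $[0,\infty)$ -- indeed $\psi_{n,c}(x) \geq \bigl(p_{n,0}^{[c]}(x)\bigr)^2 = (1+cx)^{-2n/c} > 0$ -- so that $\log \psi_{n,c}$ is well defined and the second-derivative criterion above is legitimate.
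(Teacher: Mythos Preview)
Your proposal is correct and follows exactly the paper's approach: the corollary is obtained directly from Theorem~\ref{C1} by invoking the general fact that completely monotonic functions are logarithmically convex, with the same citation to Theorem~1 in \cite{Fink}. Your additional Cauchy--Schwarz sketch via the Bernstein representation merely spells out this standard implication in more detail than the paper does.
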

This was conjectured recently (but not necessarily only for \(c>0\)) by I. Ra\c{s}a in Conjecture 7.1 in \cite{Rasa}.

As a further application of Theorem \ref{C1} we obtain an inequality of Chebyshev-Gr\"{u}ss-type for the positive linear operator \(L_n^{[c]}\) defined in \eqref{Op}. We point out that this provides an extension of Theorem 16 in \cite{Gonska} from the special case \(c=1\) to arbitrary \(c>0\).
\begin{corollary}\label{C2} We have for bounded functions \(f, g:[0,\infty) \rightarrow \mathbb{R}\)
\[\left\vert L_n^{[c]}fg (x) -L_n^{[c]}f (x) L_n^{[c]}g (x) \right\vert \leq \frac{1}{2} \mathrm{osc}(f) \mathrm{osc}(g), \quad x\geq 0,\]
where 
\[\mathrm{osc}(f)=\sup\left\{\left\vert f\left(\frac{k}{n}\right)-f\left(\frac{l}{n}\right) \right\vert : 0\leq k< l < \infty\right\}\]
and similarly for \(\mathrm{osc}(g)\).
\end{corollary}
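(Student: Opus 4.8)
The plan is to read the claim off the general Chebyshev--Gr\"{u}ss inequality \eqref{CGI} of \cite[Thm.~9]{Gonska}, once one checks that the Baskakov operator fits its hypotheses. Fix $x\ge0$. Then $L_n^{[c]}(\cdot)(x)$ is the linear functional $f\mapsto\sum_{k=0}^{\infty}a_k f(x_k)$ with $x_k=k/n$ and $a_k=p_{n,k}^{[c]}(x)$, which is exactly of the type handled in \eqref{CGI} provided $a_k\ge0$ and $\sum_{k\ge0}a_k=1$.

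For nonnegativity I would use $\binom{-n/c}{k}=(-1)^k\binom{n/c+k-1}{k}$, which gives $\binom{-n/c}{k}(-cx)^k=\binom{n/c+k-1}{k}(cx)^k\ge0$ for $c>0$, $x\ge0$; together with $(1+cx)^{-n/c-k}>0$ this yields $p_{n,k}^{[c]}(x)\ge0$. For the normalization I would sum the binomial series
\[
\sum_{k=0}^{\infty}\binom{-n/c}{k}\Bigl(\frac{-cx}{1+cx}\Bigr)^{k}=\Bigl(1-\frac{cx}{1+cx}\Bigr)^{-n/c}=(1+cx)^{n/c},
\]
legitimate since $0\le\frac{cx}{1+cx}<1$ for $x\ge0$; multiplying by the prefactor $(1+cx)^{-n/c}$ shows $\sum_{k\ge0}p_{n,k}^{[c]}(x)=1$ (this is just $L_n^{[c]}1=1$). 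With these two facts \eqref{CGI} applies with $\sum_{k\ge0}a_k^2=\sum_{k\ge0}\bigl(p_{n,k}^{[c]}(x)\bigr)^2=\psi_{n,c}(x)$, so
\[
\bigl|L_n^{[c]}fg(x)-L_n^{[c]}f(x)\,L_n^{[c]}g(x)\bigr|\le\frac{1}{2}\bigl(1-\psi_{n,c}(x)\bigr)\mathrm{osc}(f)\,\mathrm{osc}(g).
\]

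To conclude it only remains to observe $\psi_{n,c}(x)\ge0$, which is clear since $\psi_{n,c}$ is a sum of squares; Theorem \ref{C1} secures more, namely that $\psi_{n,c}$ is a genuine finite, nonincreasing function with $\psi_{n,c}(0)=1$, so in fact $0\le1-\psi_{n,c}(x)\le1$. Hence the right-hand side above is at most $\frac{1}{2}\,\mathrm{osc}(f)\,\mathrm{osc}(g)$ for every $c>0$ and $x\ge0$, which is the assertion. I do not expect a genuine obstacle here: the statement is a direct corollary, and the only points needing care are matching the hypotheses of \cite[Thm.~9]{Gonska} --- in particular checking the sign of $\binom{-n/c}{k}(-cx)^k$ so that the $p_{n,k}^{[c]}$ form a nonnegative partition of unity for $c>0$ --- and the absolute convergence of $\psi_{n,c}(x)$, which holds because $p_{n,k}^{[c]}(x)$ decays geometrically in $k$ for each fixed $x$.
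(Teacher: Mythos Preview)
Your argument is correct and follows the same route as the paper: apply the general inequality \eqref{CGI} to the functional $f\mapsto L_n^{[c]}f(x)$ and then bound $1-\psi_{n,c}(x)$ by $1$. You are more explicit than the paper in verifying the hypotheses $p_{n,k}^{[c]}(x)\ge 0$ and $\sum_k p_{n,k}^{[c]}(x)=1$, which the paper takes for granted.

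The only noteworthy difference is in the last step. You bound $1-\psi_{n,c}(x)\le 1$ via the trivial observation $\psi_{n,c}(x)\ge 0$ (sum of squares), which is all that is needed for the stated inequality. The paper instead invokes Theorem~\ref{C1} and Bernstein's theorem to deduce the stronger fact $\inf_{x\ge 0}\psi_{n,c}(x)=\lim_{x\to\infty}\psi_{n,c}(x)=0$; this shows in addition that the constant $\tfrac12$ cannot be improved uniformly in $x$, information your shortcut does not provide.
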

\begin{proof}
For \(n\in\mathbb{N}\) and \(c>0\) we have
\[\inf_{x\in [0,\infty)}\psi_{n,c}(x)=\lim_{x\rightarrow \infty}\psi_{n,c}(x)=0.\]
This easily follows from the representation of \(\psi_{n,c}\) as a Laplace transform, which exists according to Bernstein's theorem in connection with Theorem \ref{C1}. Thus, the statement follows from the inequality \eqref{CGI}.
\end{proof}

Next, we will study the behavior of the (complex) zeros of the function \(\psi_{n,c}\) in the central case \(c=1\) for large values of \(n\). To this end, we use a specific representation by means of hypergeometric functions.

\begin{theorem}\label{T2} We have for all \(n\in\mathbb{N}\)
\begin{equation}\label{eq1}\sum_{k=0}^{\infty}\left(p_{n,k}^{[1]}(x)\right)^2 =\frac{1}{1+2x}\binom{n-3/2}{n-1}~_{2} F_{1} \left(\begin{matrix}
  -n+1,&\frac{1}{2} \\ -n+\frac{3}{2} \end{matrix}\,\bigg\vert\,  (1+2x)^{-2} \right).
\end{equation}
\end{theorem}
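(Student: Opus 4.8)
\emph{Plan.} The idea is to reduce the left-hand side, by means of Pfaff's transformation, to $(1+2x)^{-n}$ times a Legendre polynomial evaluated at a simple rational argument, and then to turn the classical finite expansion of Legendre polynomials into the terminating hypergeometric series on the right-hand side of \eqref{eq1}.

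First I would rewrite the sum hypergeometrically. Since $\binom{-n}{k}^{2}=(n)_{k}^{2}/(k!)^{2}$ and $(-x)^{2k}=x^{2k}$,
\[\sum_{k=0}^{\infty}\left(p_{n,k}^{[1]}(x)\right)^{2}=(1+x)^{-2n}\sum_{k=0}^{\infty}\frac{(n)_{k}^{2}}{(k!)^{2}}\left(\frac{x}{1+x}\right)^{2k}=(1+x)^{-2n}\;{}_{2}F_{1}\left(\begin{matrix} n,& n\\ 1\end{matrix}\,\bigg\vert\,\left(\frac{x}{1+x}\right)^{2}\right),\]
which is the function treated in Theorem~\ref{T1} for $\alpha=n$. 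Applying Pfaff's transformation $_{2}F_{1}(a,b;c;z)=(1-z)^{-a}\,{}_{2}F_{1}\bigl(a,c-b;c;\tfrac{z}{z-1}\bigr)$ with $a=b=n$, $c=1$, $z=\bigl(\tfrac{x}{1+x}\bigr)^{2}$, and using $1-\bigl(\tfrac{x}{1+x}\bigr)^{2}=\tfrac{1+2x}{(1+x)^{2}}$, the powers of $1+x$ cancel and one gets
\[\sum_{k=0}^{\infty}\left(p_{n,k}^{[1]}(x)\right)^{2}=(1+2x)^{-n}\;{}_{2}F_{1}\left(\begin{matrix} -n+1,& n\\ 1\end{matrix}\,\bigg\vert\,-\frac{x^{2}}{1+2x}\right).\]
The upper parameter $-n+1$ is a non-positive integer, so this $_{2}F_{1}$ is a polynomial of degree $n-1$; by the classical identity $_{2}F_{1}(-m,m+1;1;\zeta)=P_{m}(1-2\zeta)$ it equals the Legendre polynomial $P_{n-1}$. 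Putting $u=1+2x$ and substituting $x=\tfrac{u-1}{2}$ gives $1+\tfrac{2x^{2}}{1+2x}=\tfrac{u^{2}+1}{2u}=\tfrac12\bigl(u+u^{-1}\bigr)$, hence
\[\sum_{k=0}^{\infty}\left(p_{n,k}^{[1]}(x)\right)^{2}=u^{-n}\,P_{n-1}\!\left(\frac{u+u^{-1}}{2}\right).\]

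Next I would insert the classical finite expansion of Legendre polynomials. From $(1-2t\cos\theta+t^{2})^{-1/2}=(1-te^{i\theta})^{-1/2}(1-te^{-i\theta})^{-1/2}=\sum_{m\ge0}P_{m}(\cos\theta)t^{m}$ together with $(1-s)^{-1/2}=\sum_{j\ge0}\binom{2j}{j}(s/4)^{j}$, one reads off the coefficient of $t^{m}$ and, after replacing $e^{\pm i\theta}$ by $u$ and pairing the monomials coming from $(j,l)$ and $(l,j)$ with $j+l=m$,
\[P_{m}\!\left(\frac{u+u^{-1}}{2}\right)=\frac{1}{4^{m}}\sum_{\substack{j,l\ge0\\ j+l=m}}\binom{2j}{j}\binom{2l}{l}u^{\,j-l}=\frac{u^{m}}{4^{m}}\sum_{k=0}^{m}\binom{2k}{k}\binom{2m-2k}{m-k}u^{-2k}.\]
With $m=n-1$ this yields $\sum_{k}(p_{n,k}^{[1]}(x))^{2}=\tfrac{1}{4^{n-1}(1+2x)}\sum_{k=0}^{n-1}\binom{2k}{k}\binom{2n-2-2k}{n-1-k}(1+2x)^{-2k}$. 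To finish, I would recognise this finite sum as a terminating $_{2}F_{1}$: with $a_{k}=\binom{2k}{k}\binom{2n-2-2k}{n-1-k}$ a short computation gives
\[\frac{a_{k+1}}{a_{k}}=\frac{(2k+1)(n-1-k)}{(k+1)(2n-3-2k)}=\frac{\bigl(k+\tfrac12\bigr)\bigl(k-n+1\bigr)}{(k+1)\bigl(k-n+\tfrac32\bigr)},\]
so that $a_{k}=\binom{2n-2}{n-1}\,\dfrac{(-n+1)_{k}\,(\tfrac12)_{k}}{(-n+\tfrac32)_{k}\,k!}$, the series terminating at $k=n-1$ because of the factor $(-n+1)_{k}$. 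Hence the sum equals $\binom{2n-2}{n-1}\,{}_{2}F_{1}\bigl(-n+1,\tfrac12;-n+\tfrac32;(1+2x)^{-2}\bigr)$, and since $\binom{2n-2}{n-1}/4^{n-1}=\binom{n-3/2}{n-1}$ this is precisely \eqref{eq1}. (Both sides are rational in $x$, so identity on $[0,\infty)$ suffices.)

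The only genuine step is the reduction, via Pfaff's transformation, of $\sum(p_{n,k}^{[1]})^{2}$ to $u^{-n}P_{n-1}\bigl((u+u^{-1})/2\bigr)$; everything after that is the bookkeeping of prefactors and elementary factorial identities. I expect the main obstacle to be simply spotting this reduction: the passage between the two hypergeometric representations in \eqref{eq1} is in effect a square-root quadratic transformation of $_{2}F_{1}$, and it is the Legendre-polynomial reformulation that makes it transparent. An alternative to the Legendre step is a single contour integral: writing $\binom{n+k-1}{k}^{2}=[t^{k}s^{k}]\,(1-t)^{-n}(1-s)^{-n}$ and evaluating the residue of the resulting diagonal generating function at $t=\bigl(x/(1+x)\bigr)^{2}$ gives the same closed form directly.
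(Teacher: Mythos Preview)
Your proof is correct, and it takes a genuinely different route from the paper. The paper starts from the generating function \(\sum_{k}p_{n,k}^{[1]}(x)e^{ikt}=(1+x-xe^{it})^{-n}\), applies Parseval's identity, performs the half-angle substitution \(u=\tan(t/2)\), expands by the binomial theorem, and evaluates the resulting integrals \(\int_{0}^{\infty}u^{2j}(1+u^{2})^{-n}\,du\) via the Beta function to obtain the explicit coefficients \(c_{n,j}=4^{1-n}\binom{2j}{j}\binom{2n-2-2j}{n-1-j}\), which are then repackaged as the terminating \({}_{2}F_{1}\). You instead stay entirely within hypergeometric and orthogonal-polynomial identities: Pfaff's transformation turns the non-terminating \({}_{2}F_{1}(n,n;1;\,\cdot\,)\) into a polynomial, recognised as \(P_{n-1}\bigl(\tfrac{u+u^{-1}}{2}\bigr)\) with \(u=1+2x\), and the classical expansion of \(P_{m}(\cos\theta)\) obtained from the product of two binomial series yields precisely the same coefficients. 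The two arguments meet at the identical finite expansion in powers of \((1+2x)^{-1}\). The paper's integral approach points naturally toward other bases (their remark on the Meyer--K\"{o}nig--Zeller connection), whereas your approach makes the Legendre-polynomial structure completely explicit from the outset, a connection the paper only records afterwards via associated Legendre functions, and it avoids any integration.
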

\begin{proof} Starting from the identity
\[\sum_{k=0}^{\infty}p_{n,k}^{[1]}(x) e^{ikt} = \left(1+x-xe^{it}\right)^{-n}, \quad t\in\mathbb{R},\]
an application of Parseval's identity yields
\[\sum_{k=0}^{\infty}\left(p_{n,k}^{[1]}(x)\right)^2 = \frac{1}{2\pi}\int\limits_{-\pi}^\pi \left\vert 1+x-xe^{it}\right\vert^{-2n} dt= \frac{1}{2\pi}\int\limits_{-\pi}^\pi \left(1+2x(1+x)(1-\cos t)\right)^{-n} dt.\]
Now, the change of variable \(u=\tan\left(t/2\right)\) gives us
\begin{align*}\sum_{k=0}^{\infty}\left(p_{n,k}^{[1]}(x)\right)^2 =& \frac{1}{\pi}\int\limits_{-\infty}^{\infty} \left(1+2x(1+x)\frac{2u^2}{1+u^2}\right)^{-n} \frac{du}{1+u^2}\\
=&\frac{2}{\pi}\int\limits_{0}^{\infty} \frac{(1+u^2)^{n-1}}{\left(1+(1+2x)^2 u^2\right)^n} du.
\end{align*}
A further simple change of variables and an application of the binomial theorem yields
\[\sum_{k=0}^{\infty}\left(p_{n,k}^{[1]}(x)\right)^2=\sum_{j=0}^{n-1} c_{n,j} \left(1+2x\right)^{-2j-1},\]
where the coefficients are given by
\[c_{n,j}=\frac{2}{\pi}\binom{n-1}{j} \int\limits_{0}^{\infty}\frac{u^{2j}}{\left(1+u^2\right)^n} du.\]
Finally, we will compute the coefficients explicitly by using the Beta function \(B\). From
\[\int\limits_{0}^{\infty}\frac{u^{2j}}{\left(1+u^2\right)^n} du=\frac{1}{2} B\left(j+\frac{1}{2}, n-j-\frac{1}{2}\right)=\frac{\pi (2j)! (2n-2j-2)!}{2^{2n-1} (n-1)! j! (n-1-j)!}\]
we obtain 
\[c_{n,j}= \frac{ (2j)! (2n-2j-2)!}{2^{2n-2}  \left(j! (n-1-j)!\right)^2},\]
which implies
\[\sum_{k=0}^{\infty}\left(p_{n,k}^{[1]}(x)\right)^2=\frac{1}{2^{2n-2}} \binom{2n-2}{n-1} \sum_{j=0}^{n-1} \binom{n-1}{j}^2 \binom{2n-2}{2j}^{-1} \left(1+2x\right)^{-2j-1}.\]
This latter expression can easily be rewritten in terms of hypergeometric functions which yields the representation \eqref{eq1}.
\end{proof}
\begin{remark} The idea to apply Parseval's identity to obtain a suitable hypergeometric representation is used by Gavrea and Ivan in \cite{Gavrea}. In fact, in Theorem 5 they derive a representation for the sums of squares of the classical Meyer-K\"{o}nig and Zeller basis \(b_{n,k}(x)=\binom{n+k}{k}x^k (1-x)^{n+1}\), which is connected to the central Baskakov case by
\[p_{n+1,k}^{[1]}(x)=b_{n,k}\left(\frac{x}{1+x}\right),\quad x\in[0,\infty).\]
\end{remark}
\begin{remark} The identity \eqref{eq1} in Theorem \ref{T2} provides an alternative proof of the complete monotonicity of \(\psi_{n,c}\) in the case \(c=1\) as it expresses \(\psi_{n,1}\) as a sum of odd powers of \((1+2x)^{-1}\) with positive coefficients.
\end{remark}
Let the sequence of hypergeometric polynomials \((P_n)\) be defined by
\[P_n(z)=~ _{2} F_{1} \left(\begin{matrix}
  -n,&\frac{1}{2} \\ -n+\frac{1}{2} \end{matrix}\,\bigg\vert\,  z \right), \quad n\in\mathbb{N}.\]
It is known \cite{Sri} that the zeros of \(P_n\) approach the unit circle as \(n\) tends to infinity. In addition to that, we are going to show that in the limit, the zeros are distributed uniformly on the unit circle. To this end, we use techniques from logarithmic potential theory (cf. \cite{Abel2} and \cite{Neuschel}).

\begin{theorem} \label{Thm1}The zeros of the polynomials \(P_n\) follow the equilibrium measure on the unit disk. More precisely, if \((\mu_n)\) denotes the sequence of normalized zero counting measures associated to \(P_n\), then the sequence \((\mu_n)\)  converges in the weak-star sense to the uniform distribution on the unit circle (normalized arc measure).
\end{theorem}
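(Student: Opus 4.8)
The plan is to recast the statement in the language of logarithmic potential theory and then to identify the limiting measure by a uniqueness argument. First, note that $P_n$ is a \emph{monic} polynomial of degree $n$: writing $P_n(z)=\sum_{k=0}^{n}c_{n,k}z^{k}$ with $c_{n,k}=(-n)_k(1/2)_k\big/\big((-n+1/2)_k\,k!\big)$, one has $(-n)_n=(-1)^nn!$, $(1/2)_n=(2n)!/(4^nn!)$ and $(-n+1/2)_n=(-1)^n(2n)!/(4^nn!)$, so $c_{n,n}=1$. Hence, denoting by $\mu_n$ the normalized zero counting measure of $P_n$, we have the \emph{exact} identity
\[
\frac1n\log|P_n(z)|=\int\log|z-t|\,d\mu_n(t)=-U^{\mu_n}(z),\qquad z\in\mathbb{C},
\]
where $U^{\mu}$ denotes the logarithmic potential. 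By the cited result \cite{Sri}, all zeros of $P_n$ lie in $\{\,1-\varepsilon<|z|<1+\varepsilon\,\}$ once $n$ is large, so $(\mu_n)$ is tight and, by the portmanteau theorem, every weak-star limit $\nu$ along a subsequence is a probability measure with $\operatorname{supp}\nu\subseteq\partial D$, where $D:=D_1(0)$ is the unit disk. By Prokhorov's theorem (relative compactness) it therefore suffices to show that every such $\nu$ equals the normalized arc measure $m$ on $\partial D$.

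The analytic core is to establish
\[
\lim_{n\to\infty}\frac1n\log|P_n(z)|=\log|z|\qquad\text{for all }z\text{ with }|z|\ge3 .
\]
The cleanest route I see is elementary. One checks $c_{n,k}=\pi^{-1/2}g(k)g(n-k)\big/g(n)$ with $g(m):=\Gamma(m+\tfrac12)/\Gamma(m+1)=\sqrt{\pi}\,\binom{2m}{m}4^{-m}>0$; since $g(m+1)/g(m)=1-\tfrac1{2(m+1)}$ increases with $m$, the sequence $m\mapsto\log g(m)$ is convex, hence $k\mapsto\log g(k)+\log g(n-k)$ is convex on $\{0,\dots,n\}$ and maximal at an endpoint, giving $g(k)g(n-k)\le g(0)g(n)=\sqrt{\pi}\,g(n)$, i.e. $0\le c_{n,k}\le1$ for all $n,k$. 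Consequently $|P_n(z)-z^{n}|\le\sum_{k=0}^{n-1}|z|^{k}\le|z|^{n}/(|z|-1)\le|z|^{n}/2$ for $|z|\ge3$, whence $\tfrac12|z|^{n}\le|P_n(z)|\le\tfrac32|z|^{n}$ and the claim follows (uniformly on $|z|\ge3$). Alternatively, one can extract the same asymptotics from the integral representation $\psi_{n,1}(x)=\frac{2}{\pi}\int_0^{\infty}(1+u^2)^{n-1}\big(1+(1+2x)^2u^2\big)^{-n}du$ of the proof of Theorem~\ref{T2} (analytically continued to complex $x$), using $\psi_{n,1}(x)=(1+2x)^{-1}\binom{n-3/2}{n-1}P_{n-1}\big((1+2x)^{-2}\big)$ together with a Laplace/saddle-point analysis, the only relevant saddles of $(1+u^2)/(1+(1+2x)^2u^2)$ on $[0,\infty]$ being $u=0$ and $u=\infty$, contributing the values $1$ and $(1+2x)^{-2}$.

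To finish, let $\nu$ be a weak-star limit of $(\mu_{n_j})$. For $|z|\ge3$ the map $t\mapsto\log|z-t|$ is continuous and bounded on $\overline{D_2(0)}$, which contains all the relevant supports, so $U^{\mu_{n_j}}(z)\to U^{\nu}(z)$; combined with the previous paragraph, $U^{\nu}(z)=-\log|z|$ for $|z|\ge3$. Expanding for large $|z|$,
\[
U^{\nu}(z)=-\log|z|+\operatorname{Re}\sum_{k\ge1}\frac{m_k}{k\,z^{k}},\qquad m_k=\int\zeta^{k}\,d\nu(\zeta),
\]
we see that $F(z):=\sum_{k\ge1}m_kz^{-k}/k$ is holomorphic on $\{|z|>1\}$ with $\operatorname{Re}F\equiv0$ on $\{|z|>3\}$ and $F(z)\to0$ at infinity, which forces $F\equiv0$ and hence $m_k=0$ for all $k\ge1$. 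Since $\operatorname{supp}\nu\subseteq\partial D$, also $m_{-k}=\overline{m_k}=0$, so every non-trivial Fourier coefficient of $\nu$ vanishes and $\nu=m$. As every weak-star limit point of $(\mu_n)$ equals $m$, the whole sequence $(\mu_n)$ converges weak-star to $m$, which is the assertion.

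I expect the main obstacle to be the exponential-order estimate of the second paragraph. In the elementary route it rests on the somewhat hidden uniform bound $c_{n,k}\le1$, itself a consequence of the convexity of $m\mapsto\log g(m)$; in the analytic route through Theorem~\ref{T2} the delicate point is the saddle-point bookkeeping for a complex parameter, in particular ruling out any contribution to the exponential order from interior non-critical maxima of the modulus of the integrand on the positive real axis.
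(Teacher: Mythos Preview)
Your proof is correct and follows the same potential-theoretic skeleton as the paper's: extract a weak-star convergent subsequence, compute the limiting logarithmic potential outside the unit disk, and identify the limit measure by a uniqueness principle. The implementation, however, is genuinely different and noticeably more self-contained. The paper computes the potential for $|z|>1$ by importing from \cite{Sri} the asymptotic $z^{n}P_n(1/z)\to(1-z)^{-1/2}$ uniformly on compacts of the open disk, and then invokes Carleson's unicity theorem to conclude $\mu=m$. You replace both steps: for the potential you give an elementary coefficient bound (the neat convexity argument yielding $0\le c_{n,k}\le1$, hence $\tfrac12|z|^n\le|P_n(z)|\le\tfrac32|z|^n$ for $|z|\ge3$), and for the identification you use the vanishing of the Fourier coefficients of $\nu$ on the circle. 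Your route avoids the black-box asymptotics and the named uniqueness theorem; the paper's route gives the potential on the full exterior $|z|>1$ rather than only on $|z|\ge3$, but your subsequent holomorphy/identity-theorem step recovers what is needed. Incidentally, your coefficient bound already yields (via the Cauchy bound) that all zeros lie in $|z|<2$, so you do not actually need the citation of \cite{Sri} for the zero localization either; the alternative Laplace/saddle-point paragraph is superfluous given that the elementary argument works.
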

\begin{proof} We begin by recapitulating the asymptotic behavior of \(P_n\) from the proof of Theorem 1 in \cite{Sri}. For every \(0<\epsilon<1\) we have
\[\lim_{n\rightarrow\infty} P_n(z)=(1-z)^{-\frac{1}{2}}\]
and
\begin{equation}\label{1}\lim_{n\rightarrow\infty} z^n P_n\left(\frac{1}{z}\right)=(1-z)^{-\frac{1}{2}}
\end{equation}
uniformly on \(\vert z\vert \leq \epsilon\). From the fact that the limits are free from any zeros we infer that, if the sequence \((\mu_n)\) has a weak-star limit, then it will be supported on the unit circle. Now, if we start with an arbitrary subsequence of \((\mu_n)\), then by Helly's selection principle \cite{Saff} we can choose a further subsequence, denoted by \((\mu_{n_k})\), which has a weak-star limit \(\mu\). For the logarithmic potentials we know
\[\mathcal{U}^{\mu_n}(z)=-\frac{1}{n}\log \vert P_n(z)\vert = -\log\vert z \vert -\frac{1}{n} \log \vert z^{-n} P_n(z)\vert.\]
Now, for \(\vert z\vert >1\) we obtain using (\ref{1}) that
\[\lim_{n\rightarrow\infty} \frac{1}{n} \log \vert z^{-n} P_n(z)\vert = 0,\]
which gives
\[\mathcal{U}^{\mu}(z) =\lim_{k\rightarrow\infty}\mathcal{U}^{\mu_{n_k}}(z)=\log\frac{1}{\vert z\vert}.\]
Observing that this coincides with the logarithmic potential of the uniform distribution (equilibrium measure) of the unit disk, using Carleson's unicity theorem \cite{Saff} we can conclude that \(\mu\) coincides with the uniform distribution. Moreover, this shows that the whole sequence \((\mu_n)\) converges in the weak-star sense to the uniform distribution, which completes the proof.
\end{proof}

Next, using the representation from Theorem \ref{T2}
\[\sum_{k=0}^{\infty}\left(p_{n,k}^{[1]}(x)\right)^2 =\frac{1}{1+2x}\binom{n-3/2}{n-1} P_{n-1}\left(\frac{1}{(1+2x)^2}\right)\]
we immediately can translate Theorem \ref{Thm1} into a statement for the squared Baskakov functions. By \(D_{\frac{1}{2}}\left(-\frac{1}{2}\right)\) we denote the disk around \(-\frac{1}{2}\) with radius \(\frac{1}{2}\).
\begin{theorem} \label{C3}All complex zeros of the rational functions 
\[\psi_{n,1}(x)=\sum_{k=0}^{\infty}\left(p_{n,k}^{[1]}(x)\right)^2 \]
follow the equilibrium measure on \(D_{\frac{1}{2}}\left(-\frac{1}{2}\right)\), as \(n\rightarrow\infty\). In particular, for large values of \(n\), all the zeros approach the boundary of \(D_{\frac{1}{2}}\left(-\frac{1}{2}\right)\) and they are uniformly distributed in the limit.
\end{theorem}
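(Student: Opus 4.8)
The plan is to transport Theorem~\ref{Thm1} along the substitution $w=\phi(x)$, $\phi(x)=(1+2x)^{-2}$. By Theorem~\ref{T2},
\[
\psi_{n,1}(x)=\frac{1}{1+2x}\binom{n-3/2}{n-1}\,P_{n-1}\!\left(\phi(x)\right),
\]
and since the prefactor $\frac{1}{1+2x}\binom{n-3/2}{n-1}$ is finite and non-zero for $x\neq-\tfrac12$, the finite zeros of the rational function $\psi_{n,1}$ are precisely the $\phi$-preimages of the zeros of $P_{n-1}$. Now $\phi$ is a rational self-map of $\overline{\mathbb{C}}$ of degree $2$ whose only critical values are $0$ (attained at $x=\infty$) and $\infty$ (attained at $x=-\tfrac12$), and it maps the circle $|1+2x|=1$, i.e.\ $\partial D_{1/2}(-\tfrac12)$, onto the unit circle $|w|=1$. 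From the proof of Theorem~\ref{Thm1} (which follows \cite{Sri}) one extracts that for every $\epsilon\in(0,1)$ and all sufficiently large $n$ the zeros of $P_{n-1}$ lie in the annulus $\epsilon\le|w|\le\epsilon^{-1}$; consequently, for large $n$, all zeros of $\psi_{n,1}$ lie in the fixed compact annulus $\tfrac12\sqrt{\epsilon}\le|x+\tfrac12|\le\tfrac12\epsilon^{-1/2}$, in particular bounded and bounded away from the critical fibres over $0$ and $\infty$.

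Let $\nu_n$ and $\mu_{n-1}$ denote the normalized zero counting measures of $\psi_{n,1}$ and of $P_{n-1}$. Since $\phi'$ vanishes only at $x=-\tfrac12,\infty$, each zero $w$ of $P_{n-1}$ contributes its multiplicity to each of its two $\phi$-preimages, so $\nu_n$ is the balanced pull-back $\phi^{\ast}\mu_{n-1}$, determined by $\int g\,d\nu_n=\int\left(\phi_{\#}g\right)d\mu_{n-1}$ with $\left(\phi_{\#}g\right)(w)=\tfrac12\sum_{x\in\phi^{-1}(w)}g(x)$ for $g\in C_c(\mathbb{C})$. Over the annulus $\epsilon\le|w|\le\epsilon^{-1}$ the map $\phi$ is an unbranched proper $2$-sheeted covering, so $\phi_{\#}g$ extends to a bounded continuous function on $\mathbb{C}$ (vanishing near $w=0$). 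Since $\mu_{n-1}\to\omega$ weak-$\ast$ by Theorem~\ref{Thm1}, $\omega$ being the normalized arc measure on $|w|=1$, and since no mass escapes, $\int g\,d\nu_n=\int\left(\phi_{\#}g\right)d\mu_{n-1}\to\int\left(\phi_{\#}g\right)d\omega=\int g\,d\!\left(\phi^{\ast}\omega\right)$; that is, $\nu_n\to\phi^{\ast}\omega$ weak-$\ast$.

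It remains to identify the limit $\phi^{\ast}\omega$. A symmetry argument does this at once: every rotation $w\mapsto e^{i\beta}w$ preserves $\omega$, and a direct computation gives $\phi\!\left(-\tfrac12+e^{-i\beta/2}\left(x+\tfrac12\right)\right)=e^{i\beta}\phi(x)$, so $\phi^{\ast}\omega$ is invariant under every rotation of the $x$-plane about the point $-\tfrac12$; being a probability measure supported on the circle $\partial D_{1/2}(-\tfrac12)$ it must equal the normalized arc measure there, which is exactly the equilibrium measure of $\overline{D_{1/2}(-\tfrac12)}$. (Alternatively, writing $\phi(x)-w=\frac{-w}{(1+2x)^{2}}\left((1+2x)^{2}-w^{-1}\right)$ one computes $\mathcal{U}^{\phi^{\ast}\omega}(x)=\log2-\log|1+2x|+\tfrac12\,\mathcal{U}^{\omega}(\phi(x))$ and checks that this equals $\min\{\log2,\,-\log|x+\tfrac12|\}$, the equilibrium potential of $\overline{D_{1/2}(-\tfrac12)}$.) This proves the first assertion; the second follows because, by the inclusion above with $\epsilon\uparrow1$, the zeros of $\psi_{n,1}$ can accumulate only over the unit circle, i.e.\ only on $\partial D_{1/2}(-\tfrac12)$, exactly as the zeros of $P_{n-1}$ accumulate only on $|w|=1$.

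The step I expect to be the main obstacle is the bookkeeping near the two critical fibres: one must be sure that the zeros of $\psi_{n,1}$ remain in a fixed compact set avoiding $x=-\tfrac12$, so that $\nu_n$ loses no mass and the pull-back operation is continuous with respect to weak-$\ast$ convergence; this is exactly what the annulus localization of the zeros of $P_{n-1}$ provides. Should one wish to avoid covering-map language, the same conclusion can be reached in the style of the proof of Theorem~\ref{Thm1}: from $Q_n(x):=(1+2x)^{2n-1}\psi_{n,1}(x)=\binom{n-3/2}{n-1}\prod_{j=1}^{n-1}\left(1-w_j(1+2x)^{2}\right)$, where $w_1,\dots,w_{n-1}$ are the zeros of $P_{n-1}$ and one uses that $P_{n-1}$ is monic with $P_{n-1}(0)=1$, a short computation with the asymptotics from \cite{Sri} recalled above gives $\mathcal{U}^{\nu_n}(x)\to-\log|x+\tfrac12|$ for $|x+\tfrac12|>\tfrac12$ and $\to\log2$ for $|x+\tfrac12|<\tfrac12$; these are the values of the equilibrium potential of $\overline{D_{1/2}(-\tfrac12)}$, so Carleson's unicity theorem (as in the proof of Theorem~\ref{Thm1}) forces every weak-$\ast$ limit of $(\nu_n)$ to be the equilibrium measure.
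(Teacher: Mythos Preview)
Your proposal is correct and follows the paper's approach exactly: use the representation of Theorem~\ref{T2} to write $\psi_{n,1}(x)=\frac{1}{1+2x}\binom{n-3/2}{n-1}P_{n-1}\bigl((1+2x)^{-2}\bigr)$ and transport Theorem~\ref{Thm1} along the map $\phi(x)=(1+2x)^{-2}$. The paper treats this translation as immediate and gives no further argument, whereas you have carefully supplied the missing details (annulus localization, continuity of the balanced pull-back, identification of $\phi^{\ast}\omega$); one small slip in your parenthetical alternative is the claim that $P_{n-1}$ is monic---it is not, though $P_{n-1}(0)=1$ is what you actually need for the factorization, so the potential computation still goes through after adjusting the constant.
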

\begin{remark} The associated Legendre differential equation is defined by the expression
\[(1-x)^2 y'' -2xy+\left(v(v+1)-\frac{u^2}{1-x^2}\right) y =0,\]
where \(u,v\) are (real) parameters. The solutions are called associated Legendre functions of the first kind (or second kind respectively). If we denote the solutions of the first kind by \(y(v,u;x)\), then it turns out that we can connect the squared Baskakov functions to the associated Legendre functions by the identity
\[\sum_{k=0}^{\infty} \left(p_{n,k}^{[1]}(x)\right)^2=\frac{(-1)^{n+1} \sqrt{\pi} (2x+1)^{-n+1/2}}{2 (n-1)! \sqrt{x(x+1)}}y\left(-\frac{1}{2},n-\frac{1}{2}; \frac{2x^2+2x+1}{2x(x+1)}\right).\]
This can be considered as an analogue to the appearence of the Legendre polynomials in the Bernstein case (see \cite{Nikolov}).
\end{remark}

\section{Sums of higher powers}
This section is devoted to more general sums of powers of the form
\[\psi_{n,c}^{[r]}(x)=\psi_{n,c}^{[r]}(x)=\sum_{k=0}^{\infty}\left(p_{n,k}^{[c]}(x)\right)^r,\quad x\geq 0,\]
where \(r>1\) is an arbitrary integer. Numerical simulations suggest that the functions \(\psi_{n,c}^{[r]}\) are completely monotonic if \(r\) is an even integer. Trying to prove this by the approach used in the case \(r=2\) would lead to an analogue of Theorem \ref{T1}, especially of the integral representation \eqref{LT1}.
\begin{theorem}\label{L1}For an integer \(r>1\) and real \(\alpha>0\) we define
\[f_{\alpha}^{[r]}(x)=(1+x)^{-r\alpha} \sum_{k=0}^{\infty}\binom{-\alpha}{k}^r\left(\frac{x}{1+x}\right)^{rk}, \quad x\geq 0.\]
Then the following representation as a generalized multivariate Laplace transform holds
\begin{equation}\label{int2}f_{\alpha}^{[r]}(x)=\frac{1}{\left(2\pi\right)^{r-1} \Gamma(\alpha)^r}\int\limits_{(0,\infty)^r}\int\limits_{(-\pi,\pi)^{r-1}}e^{-x g(t,\varphi)}\exp\left\{-\sum_{j=1}^r t_j\right\} \prod_{j=1}^r t_j^{\alpha-1}dtd\varphi,
\end{equation}
where \(dt=d(t_1,\ldots,t_r)\), \(d\varphi=d(\varphi_1,\ldots,\varphi_{r-1})\) and 
\begin{equation}\label{g}g(t,\varphi)=\sum_{j=1}^r t_j +\prod_{j=1}^r t_j^{1/r}\left\{\sum_{j=1}^{r-1} e^{i\varphi_j} +\exp\left(-i\sum_{j=1}^{r-1} \varphi_j\right)\right\}.
\end{equation}
\end{theorem}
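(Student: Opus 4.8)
The plan is to mimic the proof of Theorem~\ref{T1}. Since $\binom{-\alpha}{k}=(-1)^{k}(\alpha)_{k}/k!$, we may write
\[
f_{\alpha}^{[r]}(x)=(1+x)^{-r\alpha}\sum_{k=0}^{\infty}\frac{(\alpha)_{k}^{r}}{k!^{r}}\left((-1)^{r}\left(\tfrac{x}{1+x}\right)^{r}\right)^{k},
\]
replace each of the $r$ Pochhammer symbols by $(\alpha)_{k}=\Gamma(\alpha)^{-1}\int_{0}^{\infty}e^{-t}t^{\alpha+k-1}\,dt$ using fresh variables $t_{1},\dots,t_{r}$, and interchange summation and integration (justified below). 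The $r$-fold product of Pochhammer symbols turns into $\left(\prod_{j}t_{j}\right)^{k}$ and we obtain
\[
f_{\alpha}^{[r]}(x)=\frac{(1+x)^{-r\alpha}}{\Gamma(\alpha)^{r}}\int\limits_{(0,\infty)^{r}}e^{-\sum_{j}t_{j}}\prod_{j=1}^{r}t_{j}^{\alpha-1}\sum_{k=0}^{\infty}\frac{1}{k!^{r}}\left((-1)^{r}\left(\tfrac{x}{1+x}\right)^{r}\prod_{j}t_{j}\right)^{k}dt .
\]
The inner sum is the ${}_{0}F_{r-1}$-type function $\sum_{k\ge 0}w^{k}/k!^{r}$, which plays the role of the Bessel function $I_{0}$ in the proof of Theorem~\ref{T1}.

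The new ingredient is an integral representation of $\sum_{k\ge0}w^{k}/k!^{r}$ over the torus $(-\pi,\pi)^{r-1}$, generalizing \eqref{Int1}. For each real $\rho$ one has $k!^{-1}=\rho^{-k}(2\pi)^{-1}\int_{-\pi}^{\pi}e^{-ik\varphi}e^{\rho e^{i\varphi}}\,d\varphi$ (integration over the circle of radius $\rho$); applying this to $r-1$ of the $r$ factors $k!^{-1}$ and choosing $\rho=w^{1/r}$ to symmetrize gives
\[
\sum_{k=0}^{\infty}\frac{w^{k}}{k!^{r}}=\frac{1}{(2\pi)^{r-1}}\int\limits_{(-\pi,\pi)^{r-1}}\exp\left(w^{1/r}\left\{\sum_{j=1}^{r-1}e^{i\varphi_{j}}+\exp\left(-i\sum_{j=1}^{r-1}\varphi_{j}\right)\right\}\right)d\varphi ,
\]
which for $r=2$ reduces to $I_{0}(2\sqrt{w})$ and hence to \eqref{Int1}. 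Substituting this into the display above and then rescaling $t_{j}\mapsto(1+x)t_{j}$ — which absorbs the factor $(1+x)^{-r\alpha}$ and replaces $w^{1/r}=((-1)^{r})^{1/r}\,\tfrac{x}{1+x}\prod_{j}t_{j}^{1/r}$ by $((-1)^{r})^{1/r}\,x\prod_{j}t_{j}^{1/r}$ — produces \eqref{int2}, with $g$ as in \eqref{g} up to the scalar $((-1)^{r})^{1/r}$.

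Two points require care. First, the sign $((-1)^{r})^{1/r}$: for odd $r$ the real $r$th root equals $-1$ and \eqref{g} emerges directly; for even $r$ it equals $1$, and one needs the further shift $\varphi_{j}\mapsto\varphi_{j}+\pi$, under which — $r-1$ being odd — each of the $r$ exponentials $e^{i\varphi_{j}}$ and $\exp(-i\sum_{j}\varphi_{j})$ changes sign, restoring \eqref{g}. A uniform, sign-free alternative is simply to \emph{verify} \eqref{int2}: writing $\tau=\prod_{l}t_{l}^{1/r}$, we have $e^{-xg(t,\varphi)}=e^{-x\sum_{j}t_{j}}\prod_{j=1}^{r-1}e^{-x\tau e^{i\varphi_{j}}}e^{-x\tau\exp(-i\sum_{j}\varphi_{j})}$; expanding the exponentials and integrating in $\varphi$ via $\frac{1}{2\pi}\int_{-\pi}^{\pi}e^{in\varphi}\,d\varphi=\delta_{n,0}$ leaves $\sum_{m\ge0}(-x\tau)^{rm}/m!^{r}$, the factor $(-1)^{rm}$ automatically matching $\binom{-\alpha}{m}^{r}$; the subsequent $t$-integration via $\int_{0}^{\infty}e^{-(1+x)t}t^{\alpha-1+m}\,dt=\Gamma(\alpha+m)(1+x)^{-\alpha-m}$, together with $(\alpha)_{m}=\Gamma(\alpha+m)/\Gamma(\alpha)$, reproduces $f_{\alpha}^{[r]}(x)$. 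Second, convergence: all interchanges are legitimate because $\operatorname{Re} g(t,\varphi)\ge 0$ — indeed $\sum_{j}t_{j}\ge r\prod_{j}t_{j}^{1/r}$ by AM--GM while $\sum_{j=1}^{r-1}\cos\varphi_{j}+\cos(\sum_{j}\varphi_{j})\ge -r$ — so $|e^{-xg}|\le 1$ and the dominating function $e^{-\sum_{j}t_{j}}\prod_{j}t_{j}^{\alpha-1}$ has finite integral $\Gamma(\alpha)^{r}$ over $(0,\infty)^{r}$; in the term-by-term steps one uses in addition that $\sum_{m}(x\tau)^{rm}/m!^{r}$, being entire of order $1/r$, is dominated by $C_{\epsilon}e^{(1+\epsilon/r)x\sum_{j}t_{j}}$, which is outweighed by $e^{-(1+x)\sum_{j}t_{j}}$ for small $\epsilon$. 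Since Theorem~\ref{L1} asserts only the identity \eqref{int2} — not complete monotonicity, which for $r\ge4$ would additionally require controlling the sign of the $\varphi$-average of $e^{-xg}$ — the essential task is just this multi-contour bookkeeping, and I expect the direct verification above to be the least error-prone route.
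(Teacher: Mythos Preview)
Your proposal is correct and follows essentially the same route as the paper: both replace the $r$ Pochhammer symbols by Gamma integrals and then represent $\sum_{k}w^{k}/k!^{r}$ via an $(r-1)$-fold circle integral (the paper phrases this as Hankel's formula $\frac{1}{k!}=\frac{1}{2\pi i}\int_{\gamma}e^{z}z^{-k-1}\,dz$ and then parameterizes $z_{j}=-(t_{1}\cdots t_{r})^{1/r}x\,e^{i\varphi_{j}}$, which builds the minus sign in directly and so bypasses your $((-1)^{r})^{1/r}$ case split). Your treatment is in fact more careful than the paper's on two points the paper leaves implicit: the sign bookkeeping for even versus odd $r$, and the integrability/Fubini justifications via $\Re g\ge 0$ and the order-$1/r$ growth of $I^{[r]}$.
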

\begin{proof}We have 
\[f_{\alpha}^{[r]}(x)=(1+x)^{-r\alpha}~ _{r} F_{r-1} \left(\begin{matrix}
  \alpha,&\ldots,&\alpha \\ 1,&\ldots,&1 \end{matrix}\,\bigg\vert\,  \left(\frac{-x}{1+x}\right)^{r} \right)=(1+x)^{-r\alpha} \sum_{k=0}^{\infty}\frac{(\alpha)^r_k}{(k!)^r}\left(\frac{-x}{1+x}\right)^{rk}.\]
We begin like in the proof of Theorem \ref{T1} by substituting all the Pochhammer symbols by integrals of the form
\[(\alpha)_k =\frac{1}{\Gamma (\alpha)}\int\limits_{0}^{\infty}e^{-t_j} t_j^{\alpha+k-1} dt_j\]
giving
\begin{align}\nonumber f_{\alpha}^{[r]}(x)&=\frac{(1+x)^{-r\alpha}}{\Gamma(\alpha)^r}\int\limits_{(0,\infty)^r} e^{-(t_1+\ldots+t_r)} (t_1\cdots t_r)^{\alpha-1} \sum_{k=0}^{\infty}\frac{1}{(k!)^r}\left(t_1\cdots t_r \left(\frac{-x}{1+x}\right)^r\right)^{k} dt\\\label{2}
&=\frac{1}{\Gamma(\alpha)^r}\int\limits_{(0,\infty)^r} e^{-(1+x)(t_1+\ldots+t_r)} (t_1\cdots t_r)^{\alpha-1} I^{[r]} \left(t_1\cdots t_r (-x)^r\right)dt,
\end{align}
where the last equality results from a simple change of variables and the function \(I^{[r]}\) is a generalization of the Bessel (and the exponential) function defined by
\[I^{[r]}(x)=\sum_{k=0}^{\infty}\frac{x^k}{(k!)^r}.\]
Next, we use an analogue of \eqref{Int1} for \(I^{[r]}\) which we derive by introducing Hankel's integral representation 
\[\frac{1}{k!}=\frac{1}{2\pi i}\int\limits_\gamma e^{z} z^{-k-1} dz,\]
where the integration is extended over a closed positively oriented contour in the complex plane around the origin \(\gamma\). This way, we can replace the factor \(1/(k!)^r\) by means of an \((r-1)\)-fold complex contour integral and obtain
\[I^{[r]} \left(t_1\cdots t_r (-x)^r\right)=\frac{1}{(2\pi i)^{r-1}}\int\limits_{\gamma\times \ldots \times \gamma}e^{z_1+\ldots +z_{r-1}} \exp\left\{\frac{t_1\cdots t_r (-x)^r}{z_1\cdots z_{r-1}}\right\}\frac{dz}{z_1\cdots z_{r-1}},\]
where \(dz=d(z_1\cdots z_{r-1})\). Now, using the parameterizations 
\[z_j=-(t_1\cdots t_r)^{1/r}xe ^{i\varphi_j},\quad j=1,\ldots,r-1,\]
we obtain for \(I^{[r]} \left(t_1\cdots t_r (-x)^r\right)\) the \((r-1)\)-fold integral representation
\begin{equation}\label{3}\frac{1}{(2\pi)^{r-1}}\int\limits_{(-\pi,\pi)^{r-1}}\exp\left\{-x(t_1\cdots t_r)^{1/r}\left(\sum_{j=1}^{r-1} e^{i\varphi_j} +\exp\left(-i\sum_{j=1}^{r-1} \varphi_j\right)\right)\right\}d\varphi,
\end{equation}
where \(d\varphi=d(\varphi_1,\ldots,\varphi_{r-1})\). Using \eqref{3} in \eqref{2} we immediately obtain \eqref{int2}.	
\end{proof}
\begin{remark}\label{R2}While numerical simulations suggest that the functions \(f_{\alpha}^{[r]}\) are completely monotonic if \(r\) is an even positive integer, the integral representation in Theorem \ref{L1} seems to reveal this property without additional effort only in the case \(r=2\) (reducing to the tripel integral \eqref{LT1}) as shown in Theorem \ref{T1}. However, as an immediate consequence of the inequality of arithmetic and geometric means, we know that the real part of the function \(g\) defined in \eqref{g} satisfies
\[\Re\left\{ g(t,\varphi) \right\}\geq 0\]
with equality attained only on a set of measure zero. Hence, as an application of \eqref{int2} and using Lebesgue's dominated convergence theorem we get the limiting behavior of all the derivatives of \(f_{\alpha}^{[r]}\), i.e., for every \(m\in\mathbb{N}\cup\{0\}\) (and every integer \(r>1\) and real \(\alpha>0\)) we have
\[\lim_{x\rightarrow \infty}\left(\frac{d}{dx}\right)^m f_{\alpha}^{[r]}(x)=0.\]
This remark immediately can be reformulated for the functions \(\psi_{n,c}^{[r]}(x)\). If \(c\) is a positive real number and \(n,r\) are positive integers, then for every \(m\in\mathbb{N}\cup\{0\}\) we have
\[\lim_{x\rightarrow \infty}\left(\frac{d}{dx}\right)^m \psi_{n,c}^{[r]}(x)=0.\]
\end{remark}
We close this section by stating a conjecture.
\begin{conjecture} Let \(r>1\) be an even integer and let \(\alpha\) be a real positive number. Then
\[f_{\alpha}^{[r]}(x)=(1+x)^{-r\alpha} \sum_{k=0}^{\infty}\binom{-\alpha}{k}^r\left(\frac{x}{1+x}\right)^{rk}, \quad x\geq 0,\]
is a completely monotonic function on \([0,\infty)\). In particular, if \(r>1\) is an even integer, \(c\) is a positive real number and \(n\in\mathbb{N}\), then  
\[\psi_{n,c}^{[r]}(x)=\sum_{k=0}^{\infty}\left(p_{n,k}^{[c]}(x)\right)^r,\quad x\geq 0,\]
is a completely monotonic function on \([0,\infty)\).
\end{conjecture}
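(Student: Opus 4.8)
The strategy is to peel the conjecture down, using the integral representation already derived inside the proof of Theorem~\ref{L1}, to a clean one-variable statement about the single entire function $I^{[r]}$, and then to attack that. As in the proof of Theorem~\ref{C1}, the assertion for $\psi_{n,c}^{[r]}$ is equivalent to the one for $f_\alpha^{[r]}$ (take $\alpha=n/c$ and rescale $x\mapsto cx$), so it suffices to treat $f_\alpha^{[r]}$. For $r$ even one has $(-x)^r=x^r$, so formula~\eqref{2} from the proof of Theorem~\ref{L1} reads
\[
f_\alpha^{[r]}(x)=\frac{1}{\Gamma(\alpha)^r}\int\limits_{(0,\infty)^r} e^{-(t_1+\cdots+t_r)}\,h_t(x)\,(t_1\cdots t_r)^{\alpha-1}\,dt,\qquad h_t(x):=e^{-(t_1+\cdots+t_r)x}\,I^{[r]}(t_1\cdots t_r\,x^r).
\]
Since the weight $e^{-(t_1+\cdots+t_r)}(t_1\cdots t_r)^{\alpha-1}$ is nonnegative and complete monotonicity is preserved under integration against a positive measure, it is enough — and this is the route I would pursue — to show that $h_t$ is completely monotone for every fixed $t\in(0,\infty)^r$. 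Scaling $x$ so that $t_1+\cdots+t_r=1$ and writing $c=t_1\cdots t_r$, this is in turn equivalent to the single statement
\[
\text{for every }0<c\le r^{-r},\quad \phi_c(x):=e^{-x}\,I^{[r]}(c\,x^r)\ \text{is completely monotone on }[0,\infty);
\]
here $\alpha$ has disappeared completely, and the range $0<c\le r^{-r}$ is exactly what the arithmetic--geometric mean inequality $t_1\cdots t_r\le\big((t_1+\cdots+t_r)/r\big)^r$ produces — the same inequality invoked in Remark~\ref{R2}.

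To prove that $\phi_c$ is completely monotone I would look for a \emph{real} integral representation of $I^{[r]}$ generalizing the classical formula $I_0(w)=\tfrac1\pi\int_0^\pi e^{w\cos\theta}\,d\theta$ used in Theorem~\ref{T1} (which is the case $r=2$, after $w\mapsto 2\sqrt w$): one of the shape
\[
I^{[r]}(w)=\int_\Omega e^{\,r\,w^{1/r}\cos\Phi(\omega)}\,d\rho(\omega),\qquad w\ge 0,
\]
with $\rho$ a positive measure and $\Phi$ real-valued. Granting such a representation, $\phi_c(x)=\int_\Omega e^{-x(1-r c^{1/r}\cos\Phi(\omega))}\,d\rho(\omega)$, and $1-r c^{1/r}\cos\Phi(\omega)\ge 1-r c^{1/r}\ge 0$ precisely when $c\le r^{-r}$, so $\phi_c$ is a superposition of completely monotone functions and we would be done. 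Candidate ways to obtain the representation: deform the Mellin--Barnes (Bromwich) contour in the Meijer-$G$ representation of $I^{[r]}={}_0F_{r-1}(;1,\ldots,1;\cdot)$ onto a steepest-descent path and read off the positive density; iterate the addition theorem for $I_0$, exploiting the layered identity $I^{[r]}(w)=\tfrac1{2\pi}\int_{-\pi}^\pi e^{e^{i\theta}}\,I^{[r-1]}(w e^{-i\theta})\,d\theta$ already implicit in the proof of Theorem~\ref{L1}; or invert the Laplace transform in the displayed expression for $h_t$ directly and show that the resulting density is nonnegative.

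The hard part, and presumably the reason this remains conjectural, is exactly that step. The representation~\eqref{int2}, \eqref{g} from Theorem~\ref{L1} does not suffice as it stands: for $r>2$ the frequencies $g(t,\varphi)$ are genuinely complex (only $\Re g\ge 0$ is available), so the Laplace-transform structure demanded by Bernstein's theorem is not visible, and symmetrizing in $\varphi$ merely replaces the integrand by $e^{-x\Re g}\cos(x\,\Im g)$, whose oscillating cosine factor is not completely monotone. What one really needs is an honestly \emph{nonnegative}-frequency representation, equivalently a real integral representation of $I^{[r]}$ whose exponential growth exponent never exceeds $r\,w^{1/r}$; that this bound coincides exactly with the arithmetic--geometric mean constant is a strong hint that such a representation should exist, but producing it — proving the positivity of the pertinent inverse Laplace density, or identifying the correct contour — is the whole content of the conjecture. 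Once it is in place, the remaining work (differentiation under the integral sign, and a dominated-convergence argument to pass from $h_t$ back to $f_\alpha^{[r]}$ and then to $\psi_{n,c}^{[r]}$) is routine, exactly along the lines of Theorem~\ref{T1} and Remark~\ref{R2}.
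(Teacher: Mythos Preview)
The paper offers \emph{no} proof of this statement: it is explicitly labelled a conjecture and the paper closes Section~3 with it, after remarking that the representation of Theorem~\ref{L1} ``seems to reveal this property without additional effort only in the case \(r=2\).'' So there is nothing in the paper to compare your argument against.

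Your submission is, correspondingly, not a proof but a research programme, and you are candid about this. The reductions you carry out are sound: passing from \(\psi_{n,c}^{[r]}\) to \(f_\alpha^{[r]}\) is exactly the argument of Theorem~\ref{C1}; splitting off the factor \(e^{-(t_1+\cdots+t_r)}\) in \eqref{2} and asking for complete monotonicity of each \(h_t\) is legitimate (with the routine dominated-convergence justification you mention); and the rescaling \(y=(t_1+\cdots+t_r)x\) together with the AM--GM inequality correctly reduces everything to the one-parameter family \(\phi_c(x)=e^{-x}I^{[r]}(cx^r)\) for \(0<c\le r^{-r}\). Your diagnosis of the obstruction is also accurate: the representation \eqref{int2}--\eqref{g} has genuinely complex exponents for \(r>2\), and symmetrising only produces an oscillatory cosine, so Bernstein's theorem cannot be read off. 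The missing ingredient you name --- a positive-measure representation \(I^{[r]}(w)=\int e^{\,r\,w^{1/r}\cos\Phi}\,d\rho\) --- is precisely what would close the gap, and its existence is equivalent (via Bernstein) to the complete monotonicity of \(\phi_{r^{-r}}\) itself. One caution worth recording: asking each \(h_t\) (equivalently each \(\phi_c\)) to be completely monotone is \emph{sufficient} for the conjecture but a priori stronger than it, since the integration over \(t\in(0,\infty)^r\) could in principle produce cancellations; if the pointwise statement for \(\phi_c\) turns out to fail, the conjecture might still hold by a mechanism your reduction does not capture.
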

\section*{Acknowledgements} Thorsten Neuschel gratefully acknowledges support from KU Leuven research grant OT\slash12\slash073 and the Belgian Interuniversity Attraction Pole P07/18.

\end{document}